\newcommand{\nexteq}{\displaybreak[0]\\ &=}
\newcommand{\nnext}{\displaybreak[0]\\ &}
\newtheorem{lem}{Lemma}
\newtheorem{thm}[lem]{Theorem}
\theoremstyle{definition}
\newtheorem{exam}[lem]{Example}
\newtheorem{alg}{Algorithm}
\DeclareMathOperator{\SSYT}{SSYT}
\newcommand{\la}{\leftarrow}
\newcommand{\fS}{\mathfrak{S}}
\newcommand{\cR}{\mathcal{R}}
\newcommand{\cL}{\mathcal{L}}
\title[Give a corrected bijective proof of Vershik's relations]{A correction to  the paper `A new approach to the representation theory of the symmetric groups, III'}
\author{Minwon Na}
\address{Research Center for Pure and Applied Mathematics,
Graduate School of Information Sciences,
Tohoku University, Sendai 980--8579, Japan}
\email{minwon@ims.is.tohoku.ac.jp}
\date{February 13, 2017}
\keywords{insertion algorithm, Vershik's relation, Kostka number,
symmetric group}
\subjclass[2010]{05A19, 05E10, 20C30}
\begin{document}
\maketitle
\begin{abstract}
The aim of this paper is to give a corrected bijective proof of Vershik's relations for the Kostka numbers.
Our proof uses insertion and reverse insertion algorithms, as in the combinatorial proof of 
the Pieri rule.
\end{abstract}


\section{Introduction}\label{vsec.1}

The aim of this paper is a corrected bijective proof of \cite[Theorem 4]{V}.
First of all, we recall some notations and definitions.
We write
$\lambda\vDash n$ if $\lambda$ is a composition of $n$, that is, a sequence $\lambda=(\lambda_1,\lambda_2,\dots,\lambda_h)$ of nonnegative
integers such that $|\lambda|=\sum_{i=1}^h\lambda_i=n$. 
In particular, 
if a sequence $\lambda$ is non-increasing and $\lambda_i>0$ 
for all $1\leq i\leq h$, then we write $\lambda\vdash n$ and say that 
$\lambda$ is a partition of $n$. 
We denote by 
$\lambda^{(i)}$ the composition of $n-1$ defined by $\lambda^{(i)}_i=\lambda_i-1$, and $\lambda^{(i)}_j=\lambda_j$ otherwise. 
For
$\lambda=(\lambda_1,\dots,\lambda_h)\vdash n$ and $\gamma\vdash
n-1$, we write $\gamma\preceq\lambda$
if $\gamma_i\leq\lambda_i$ for all $i$ with
$1\leq i\leq h$

Vershik has introduced a relation for the Kostka numbers (see \cite[p.143, Theorem 3.6.13]{C} and \cite[Theorem 4]{V}): for any $\lambda\vdash n$ and $\rho\vdash n-1$, we have
\begin{equation}\label{eq:1}
\sum_{\substack{\mu\vdash n\\ \mu\succeq\rho}}K(\mu,\lambda)
=\sum_{\substack{\gamma\vdash n-1\\
\gamma\preceq\lambda}}c(\lambda,\gamma)K(\rho,\gamma),
\end{equation}
where $c(\lambda,\gamma)$ the number of ways to obtain the partition $\lambda\vdash n$ 
from partition $\gamma\vdash n-1$.
This relation arises from restricting a permutation representation of 
the symmetric group $\fS_n$ to $\fS_{n-1}$ and then applying Young's rule to 
both sides. 
%
Vershik \cite[Theorem 4]{V} claims to give a bijective proof, but it 
is poorly explained and incorrect (see Example~\ref{vexam.1} below). 
The purpose of this paper is to give a bijective proof of (\ref{eq:1}) using insertion and reverse insertion algorithms. We remark that this bijection is obtained by the restiction of a bijection giving 
the Pieri rule (see \cite[p.402, 10.65]{Ni}).

This paper is organized as follows. 
After giving preliminaries in Section~\ref{vsec.2}, we 
prove (\ref{eq:1}) in Section~\ref{vsec.3}. 

\section{Preliminaries}\label{vsec.2}

Throughout this paper, let $h\geq1$, $x\geq 1$ and $n\geq1$ be integers. 
We denote by 
$D_{\mu}$ the Young diagram of
$\mu$. 
The rows and the columns are numbered from top to bottom and from left to 
right, like the rows and the columns of a matrix, respectively. 
A semistandard
Young tableau (SSYT) of shape $\mu$ and weight, or content, $\lambda=(\lambda_{1},\ldots,\lambda_{h})$ is a
filling of the Young diagram $D_{\mu}$ with the numbers
$1,2,\ldots,h$ in such a way that $i$ occupies $\lambda_i$ boxes, for $i=1,2,\ldots,h$,
and the numbers are strictly increasing down the columns and weakly increasing along the rows.
We denote by $\SSYT(\mu,\lambda)$ the set of all semistandard tableaux of
shape $\mu$ and weight $\lambda$. 
The Kostka number $K(\mu,\lambda)$ is defined to be the 
cardinality of $\SSYT(\mu,\lambda)$.

Let
$\mu\vdash n$ and 
$\lambda=(\lambda_1,\lambda_2,\ldots,\lambda_h)\vDash n$, and 
$T\in\SSYT(\mu,\lambda)$. 
First of all, we need a fundamental combinatorial algorithm on tableaux 
called \emph{row-insertion}, or \emph{bumping} (see \cite[Chapter 1]{F}).
We define an insertion tableau, denoted $T\la x$, 
by the following procedure.
 
 \begin{alg}
\textbf{Input}: Let $T\in\SSYT(\mu,\lambda)$ and 
$x$ be a positive integer.\\
\textbf{Output}: $T\la x$\\
\textbf{Initialization}: $S:=T$, $y:=x$ and $i:=1$\\
\textbf{while} $\mid\{j\mid y<S(i,j)\}\mid>0$ \textbf{do}\\
\begin{tabular}{c|l}
&\;$z:=\min\{j\mid y<S(i,j)\}$.\\
&\;$x':=S(i,z)$.\\
&\;\textbf{if} $(p,q)=(i,z)$ \textbf{then} $U(p,q):=y$\\
&\;\textbf{else}\\
&\;\;$U(p,q):=S(p,q)$\\
&\;\textbf{end if}\\
&\;$S\leftarrow U$, $y\leftarrow x'$ and $i\leftarrow i+1$.\\
\end{tabular}\\
\textbf{end}\\
$(T\la x)(i,\mu_i+1):=y$\\
Otherwise, $(T\la x)(p,q):=S(p,q)$\\
Output $T\la x$.
\end{alg}

This algorithm is invertible. Given a partition $\mu\vdash n$ 
and insertion tableau $T\la x$, there is the unique box of $T\la x$ not in 
$D_{\mu}$. 
From this box, 
we can construct the reverse
insertion algorithm, so we can recover the original tableau $T$.

\section{Vershik's relations for the Kostka numbers}\label{vsec.3}

Let $[h]=\{1,2,\ldots, h\}$, and let
$\SSYT_{[h]}(\mu)$ be the set of all SSYT's of shape $\mu$ and taking values
in $[h]$. For a partition
$\rho\vdash n-1$, Loehr \cite[p.399, 10.60]{Ni}
shows that insertion $I$ and reverse insertion $R$
give mutually inverse bijections 
\begin{align*}
I&:\SSYT_{[h]}(\rho)\times[h]\to
\bigcup_{\substack{\mu\vdash n\\ \mu\succeq\rho}}\SSYT_{[h]}(\mu),\\
R&:\bigcup_{\substack{\mu\vdash n\\ \mu\succeq\rho}}\SSYT_{[h]}(\mu)
\to\SSYT_{[h]}(\rho)\times[h].
\end{align*}
given by $I(T, x)=T\la x$ and $R(S)$ is the result of applying reverse insertion to $S$ starting at the unique box of $S$ not in $\rho$. 

\begin{thm}\label{vthm.1}
Let $\rho\vdash n-1$ and $\lambda=(\lambda_1,\dots,\lambda_h)\vDash n$ 
and,
set
\begin{align*}
\cR'&=\bigcup_{1\leq x\leq h}(\SSYT(\rho,\lambda^{(x)})\times\{x\}),\\
\cL&=\bigcup_{\substack{\mu\vdash n\\ \mu\succeq\rho}}\SSYT(\mu,\lambda).
\end{align*}
Then $I|_{\cR'}$ and $R|_{\cL}$ give mutually inverse 
bijections between $\cR'$ and $\cL$.
\end{thm}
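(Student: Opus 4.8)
The plan is to show that the known bijections $I$ and $R$ between $\SSYT_{[h]}(\rho)\times[h]$ and $\bigcup_{\mu\succeq\rho}\SSYT_{[h]}(\mu)$, when restricted to the weight-constrained subsets $\cR'$ and $\cL$, still form mutually inverse bijections with image exactly in the other set. Since $I$ and $R$ are already inverse bijections on the larger sets (quoting Loehr's result stated just before the theorem), the restrictions are automatically injective and mutually inverse \emph{as partial maps}; the entire content of the theorem reduces to checking that $I$ maps $\cR'$ \emph{into} $\cL$ and that $R$ maps $\cL$ \emph{into} $\cR'$. Once both containments are established, the fact that $I$ and $R$ invert each other on the ambient sets forces $I|_{\cR'}$ and $R|_{\cL}$ to be surjective onto each other's domains, completing the proof.

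First I would verify that $I(\cR')\subseteq\cL$. Take $(T,x)\in\cR'$, so $T\in\SSYT(\rho,\lambda^{(x)})$ for some $x\in[h]$ and we form $T\la x$. The crucial observation is a weight bookkeeping: the tableau $T$ has content $\lambda^{(x)}$, meaning it uses the letter $i$ exactly $\lambda_i$ times for $i\neq x$ and exactly $\lambda_x-1$ times for $i=x$. Inserting the single letter $x$ adds one more copy of $x$, so $T\la x$ has content exactly $\lambda=(\lambda_1,\dots,\lambda_h)$. By the general insertion bijection, $T\la x$ has some shape $\mu\vdash n$ with $\mu\succeq\rho$ (it is obtained from $\rho$ by adding one box). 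Hence $T\la x\in\SSYT(\mu,\lambda)\subseteq\cL$, giving the first containment.

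Next I would verify $R(\cL)\subseteq\cR'$. Take $S\in\SSYT(\mu,\lambda)$ with $\mu\succeq\rho$, and write $R(S)=(T,x)$, where $x$ is the letter ejected at the end of reverse insertion starting from the unique box of $S$ not in $D_\rho$. Reverse insertion removes exactly one box and expels exactly one letter $x$, so $T$ has shape $\rho$ and content obtained from $\lambda$ by decrementing the multiplicity of that expelled letter $x$ by one; that is, $T$ has content $\lambda^{(x)}$, so $T\in\SSYT(\rho,\lambda^{(x)})$ and $(T,x)\in\cR'$. The one point requiring care is confirming that the expelled value $x$ lies in the range $1\leq x\leq h$ and that $\lambda^{(x)}$ is genuinely a composition (i.e.\ $\lambda_x\geq 1$ for the expelled $x$); this holds because $S$ contains the letter $x$, so $\lambda_x\geq 1$.

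The main obstacle I anticipate is purely the content-tracking argument rather than the bijectivity, since bijectivity is inherited for free from Loehr's theorem. Specifically, the delicate step is arguing that reverse insertion applied to an element of $\cL$ produces a \emph{weight} $\lambda^{(x)}$ tableau paired with precisely the index $x$ labelling the column-summand that was decremented, so that the decomposition $\cR'=\bigcup_x(\SSYT(\rho,\lambda^{(x)})\times\{x\})$ is matched index-by-index; one must check that the second coordinate $x$ returned by $R$ equals the index whose weight was reduced, not merely some letter. Once these two content computations are in hand, I would conclude by noting that $I$ and $R$ are globally inverse, so $R(\cL)\subseteq\cR'$ and $I(\cR')\subseteq\cL$ together with $R\circ I=\mathrm{id}$ and $I\circ R=\mathrm{id}$ immediately yield $I(\cR')=\cL$ and $R(\cL)=\cR'$, establishing that the restrictions are mutually inverse bijections.
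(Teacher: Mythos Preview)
Your proposal is correct and follows essentially the same approach as the paper: both arguments reduce the theorem to the two containments $I(\cR')\subseteq\cL$ and $R(\cL)\subseteq\cR'$, verified by tracking the content under insertion and reverse insertion, and then invoke the global inverse property of $I$ and $R$ from Loehr's result to upgrade the containments to equalities. Your write-up is in fact more explicit than the paper's about the content bookkeeping and about why the expelled letter $x$ satisfies $\lambda_x\geq 1$, but the strategy is identical.
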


\begin{proof}
It is obvious that
\begin{align*}
\cR'&\subset\SSYT_{[h]}(\rho)\times[h],\\
\cL&\subset\bigcup_{\substack{\mu\vdash n\\ \mu\succeq\rho}}\SSYT_{[h]}(\mu).
\end{align*}
For each $x\in [h]$,
we have
\[I(\SSYT(\rho,\lambda^{(x)})\times\{x\})\subset\cL\]
by the definition of insertion. This implies $I(\cR')\subset\cL$.
Conversely, for each $\mu\vdash n$ with $\mu\succeq\rho$, there
exists $\ell$ such that $D_\mu=D_\rho\cup\{(\ell,\mu_\ell)\}$.
Applying reverse insertion at $(\ell,\mu_\ell)$ for each
tableau in $\SSYT(\mu,\lambda)$, we find
$R(\SSYT(\mu,\lambda))\subset\cR'$. This implies
$R(\cL)\subset\cR'$. Since $I$ and $R$ are mutually inverse
bijections, we have
\begin{align*}
\cR'&=RI(\cR')\subset R(\cL)
,\\
\cL&=IR(\cL)\subset I(\cR')
.
\end{align*}
Therefore, $I(\cR')=\cL$ and $R(\cL)=\cR'$.
\end{proof}


From Theorem~\ref{vthm.1}, we can prove (\ref{eq:1}) as follows: 
\begin{align*}
\sum_{\substack{\mu\vdash n\\ \mu\succeq\rho}}K(\mu,\lambda)&=
\sum_{\substack{\mu\vdash n\\
\mu\succeq\rho}}|\SSYT(\mu,\lambda)|
\nexteq
\sum_{1\leq x\leq h}|\SSYT(\rho,\lambda^{(x)})|&&\text{(by
Theorem~\ref{vthm.1})}\nexteq
\sum_{\substack{\gamma\vdash n-1\\
\gamma\preceq\lambda}} \sum_{\substack{1\leq x\leq h\\
\widetilde{\lambda^{(x)}}=\gamma}} |\SSYT(\rho,\lambda^{(x)})|
\nexteq \sum_{\substack{\gamma\vdash n-1\\ \gamma\preceq\lambda}}
\sum_{\substack{1\leq x\leq h\\ \widetilde{\lambda^{(x)}}=\gamma}}
|\SSYT(\rho,\gamma)| &&\text{(by \cite[Lemma 3.7.1]{C}})\nexteq
\sum_{\substack{\gamma\vdash n-1\\ \gamma\preceq\lambda}}
c(\lambda,\gamma)K(\rho,\gamma).
\end{align*}



Finally, we compare Vershik's claimed bijection with ours. 
Vershik calls a tableau in $\SSYT(\mu,\lambda)$ 
a $\mu$-tableau, and a tableau in $\SSYT(\rho,\lambda^{(x)})$ a 
$\rho$-tableau.
Since $\mu$-tableaux have one more box than $\rho$-tableaux, 
Vershik \cite[Theorem 4]{V} claims that the removal of one box from 
$\mu$-tableaux gives a bijection from $\mathcal{L}$ to $\mathcal{R}$, where 
$$\cR=\bigcup_{1\leq x\leq h}\SSYT(\rho,\lambda^{(x)})$$
has a natural bijective correspondence with $\cR'$.
Vershik \cite[Section 4]{V} gives examples, 
each of which comes with a bijection. 
However, if $\lambda=(3,3,2)\vdash 8$ and 
$\rho=(4,3)\vdash 7$ then there is no bijection from 
$\mathcal{L}$ to $\mathcal{R}$ arising from the removal of one box (see Example~\ref{vexam.1}).

\begin{exam}[{\cite[Example 1]{V}}]\label{vexam.2}
Let $\lambda=(3,2,1)\vdash 6$ and $\rho=(4,1)\vdash 5$. Then
\begin{align*}
\mu\text{-tableaux : }&A=\begin{tabular}{|c|c|c|c|c|}
\hline $1$ & $1$ & $1$ & $2$ & $2$ \\ \cline {1-5}  $3$\\
\cline{1-1} 
\end{tabular},\;
B=\begin{tabular}{|c|c|c|c|c|}
\hline $1$ & $1$ & $1$ & $2$ & $3$ \\ \cline {1-5}  $2$\\
\cline{1-1} 
\end{tabular},\;
C=\begin{tabular}{|c|c|c|c|}
\hline $1$ & $1$ & $1$ & $2$ \\ \cline {1-4}  $2$& $3$\\
\cline{1-2} 
\end{tabular},
\nnext
D=\begin{tabular}{|c|c|c|c|}
\hline $1$ & $1$ & $1$ & $3$ \\ \cline {1-4}  $2$& $2$\\
\cline{1-2} 
\end{tabular},\;
E=\begin{tabular}{|c|c|c|c|}
\hline $1$ & $1$ & $1$ & $2$ \\ \cline {1-4}  $2$\\
\cline{1-1} $3$\\ \cline{1-1} 
\end{tabular};\\
\rho\text{-tableaux : }&L=\begin{tabular}{|c|c|c|c|}
\hline $1$ & $1$ & $2$ & $2$ \\ \cline {1-4}  $3$\\
\cline{1-1} 
\end{tabular},\;
M=\begin{tabular}{|c|c|c|c|}
\hline $1$ & $1$ & $2$ & $3$ \\ \cline {1-4}  $2$\\
\cline{1-1} 
\end{tabular},\;
N=\begin{tabular}{|c|c|c|c|}
\hline $1$ & $1$ & $1$ & $2$ \\ \cline {1-4}  $3$\\
\cline{1-1} 
\end{tabular},\nnext
P=\begin{tabular}{|c|c|c|c|}
\hline $1$ & $1$ & $1$ & $3$ \\ \cline {1-4}  $2$\\
\cline{1-1} 
\end{tabular},\;
Q=\begin{tabular}{|c|c|c|c|}
\hline $1$ & $1$ & $1$ & $2$ \\ \cline {1-4}  $2$\\
\cline{1-1} 
\end{tabular}.
\end{align*}
We remove one box from the first row in $A$ and $B$, one box from the second row in $C$ and $D$, and one box $(3,1)$ in $E$ in order to obtain $\rho$-tableaux. Then we have a bijection as follows:
\[
A\leftrightarrow L;\quad B\leftrightarrow M;\quad
C\leftrightarrow N;\quad D\leftrightarrow P;\quad E\leftrightarrow Q.
\]
The bijection given by Theorem~\ref{vthm.1} is:
\begin{align*}
L&\leftrightarrow (L\la 1)=E;\quad M\leftrightarrow (M\la 1)=D;\\
N&\leftrightarrow (N\la 2)=A;\quad P\leftrightarrow (P\la 2)=C;\\
Q&\leftrightarrow (Q\la 3)=B.
\end{align*}
\end{exam}

We give an example, for which there is no bijection arising from the removal of one box.

\begin{exam}\label{vexam.1}
Let $\lambda=(3,3,2)\vdash 8$ and $\rho=(4,3)\vdash 7$. Then
\begin{align*}
\mu\text{-tableaux : }&A=\begin{tabular}{|c|c|c|c|c|}
\hline $1$ & $1$ & $1$ & $3$ & $3$ \\ \cline {1-5}  $2$& $2$ & $2$\\
\cline{1-3} 
\end{tabular},\;
B=\begin{tabular}{|c|c|c|c|c|}
\hline $1$ & $1$ & $1$ & $2$ & $3$ \\ \cline {1-5}  $2$& $2$ & $3$\\
\cline{1-3} 
\end{tabular},\;
C=\begin{tabular}{|c|c|c|c|c|}
\hline $1$ & $1$ & $1$ & $2$ & $2$ \\ \cline {1-5}  $2$& $3$ & $3$\\
\cline{1-3} 
\end{tabular},\nnext
D=\begin{tabular}{|c|c|c|c|}
\hline $1$ & $1$ & $1$ & $2$ \\ \cline {1-4}  $2$& $2$ & $3$ & $3$\\
\cline{1-4} 
\end{tabular},\;
E=\begin{tabular}{|c|c|c|c|}
\hline $1$ & $1$ & $1$ & $3$ \\ \cline {1-4}  $2$& $2$ & $2$\\
\cline{1-3} $3$\\ \cline{1-1} 
\end{tabular},\;
F=\begin{tabular}{|c|c|c|c|}
\hline $1$ & $1$ & $1$ & $2$ \\ \cline {1-4}  $2$& $2$ & $3$\\
\cline{1-3} $3$\\ \cline{1-1} 
\end{tabular}
;\\
\rho\text{-tableaux : }
&L=\begin{tabular}{|c|c|c|c|}
\hline $1$ & $1$ & $2$ & $3$ \\ \cline {1-4}  $2$& $2$ & $3$\\
\cline{1-3} 
\end{tabular},\;
M=\begin{tabular}{|c|c|c|c|}
\hline $1$ & $1$ & $2$ & $2$ \\ \cline {1-4}  $2$& $3$ & $3$\\
\cline{1-3} 
\end{tabular},\;
N=\begin{tabular}{|c|c|c|c|}
\hline $1$ & $1$ & $1$ & $3$ \\ \cline {1-4}  $2$& $2$ & $3$\\
\cline{1-3} 
\end{tabular},\nnext
P=\begin{tabular}{|c|c|c|c|}
\hline $1$ & $1$ & $1$ & $2$ \\ \cline {1-4}  $2$& $3$ & $3$\\
\cline{1-3} 
\end{tabular},\;
Q=\begin{tabular}{|c|c|c|c|}
\hline $1$ & $1$ & $1$ & $3$ \\ \cline {1-4}  $2$& $2$ & $2$\\
\cline{1-3} 
\end{tabular},\;
R=\begin{tabular}{|c|c|c|c|}
\hline $1$ & $1$ & $1$ & $2$ \\ \cline {1-4}  $2$& $2$ & $3$\\
\cline{1-3} 
\end{tabular}.
\end{align*}
As mentioned in Section~\ref{vsec.1}, 
$\mu$-tableaux $A$ and $E$ result in $\rho$-tableau $Q$,
so there is no bijection between $\mu$-tableaux and $\rho$-tableaux 
arising from the removal of one box. 
The bijection given by Theorem~\ref{vthm.1} is:
\begin{align*}
L&\leftrightarrow (L\la 1)=E;\quad M\leftrightarrow (M\la 1)=F;\\
N&\leftrightarrow (N\la 2)=D;\quad P\leftrightarrow (P\la 2)=C;\\
Q&\leftrightarrow (Q\la 3)=A;\quad R\leftrightarrow (R\la 3)=B.
\end{align*}
\end{exam}

\end{document}